\g@addto@macro\normalsize{%
  \setlength\abovedisplayskip{4pt}
  \setlength\belowdisplayskip{4pt}
  \setlength\abovedisplayshortskip{4pt}
  \setlength\belowdisplayshortskip{4pt}
}
\numberwithin{equation}{section}
\crefname{section}{Section}{Sections}
\crefname{subsection}{Subsection}{Subsections}
\crefname{condition}{Condition}{Conditions}
\crefname{hypothesis}{Hypothesis}{Conditions}
\crefname{assumption}{Assumption}{Assumptions}
\crefname{lemma}{Lemma}{Lemmas}
\newtheorem{theorem}{Theorem}[section]
\newtheorem{lemma}[theorem]{Lemma}
\newtheorem{definition}[theorem]{Definition}
\newtheorem{remark}[theorem]{Remark}        
\numberwithin{equation}{section}
\def\aa{\mathcal{A}}
\newcommand{\lamot}{\La_0,\La_1}
\newcommand{\pa}{\partial}
\newcommand{\tQ}{\tilde{Q}}
\newcommand{\vo}{\vec{o}\@ifnextchar{^}{\,}{}}
\def\Yint#1{\mathchoice
    {\YYint\displaystyle\textstyle{#1}}%
    {\YYint\textstyle\scriptstyle{#1}}%
    {\YYint\scriptstyle\scriptscriptstyle{#1}}%
    {\YYint\scriptscriptstyle\scriptscriptstyle{#1}}%
      \!\iint}
\def\YYint#1#2#3{{\setbox0=\hbox{$#1{#2#3}{\iint}$}
    \vcenter{\hbox{$#2#3$}}\kern-.50\wd0}}
\def\longdash{-\mkern-9.5mu-} 
\def\tiltlongdash{\rotatebox[origin=c]{18}{$\longdash$}}
\def\fiint{\Yint\tiltlongdash}
\def\Xint#1{\mathchoice
    {\XXint\displaystyle\textstyle{#1}}%
    {\XXint\textstyle\scriptstyle{#1}}%
    {\XXint\scriptstyle\scriptscriptstyle{#1}}%
    {\XXint\scriptscriptstyle\scriptscriptstyle{#1}}%
      \!\int}
\def\XXint#1#2#3{{\setbox0=\hbox{$#1{#2#3}{\int}$}
    \vcenter{\hbox{$#2#3$}}\kern-.50\wd0}}
\def\hlongdash{-\mkern-13.5mu-}
\def\tilthlongdash{\rotatebox[origin=c]{18}{$\hlongdash$}}
\def\hint{\Xint\tilthlongdash}
\def\namedlabel#1#2{\begingroup
   \def\@currentlabel{#2}%
   \label{#1}\endgroup
}
\newcommand{\rmh}[1]{\mathpalette{\raisem@th{#1}}}
\newcommand{\raisem@th}[3]{\hspace*{-1pt}\raisebox{#1}{$#2#3$}}
\newcommand{\lsb}[2]{#1_{\rmh{-3pt}{#2}}}
\newcommand{\redref}[2]{\texorpdfstring{\protect\hyperlink{#1}{\textcolor{black}{(}\textcolor{red}{#2}\textcolor{black}{)}}}{}}
\newcommand{\redlabel}[2]{\hypertarget{#1}{\textcolor{black}{(}\textcolor{red}{#2}\textcolor{black}{)}}}
\newcommand{\descref}[2]{\hyperref[#1]{\textnormal{\textcolor{black}{(}\textcolor{blue}{\bf #2}\textcolor{black}{)}}}}
\newcommand{\dref}[2]{\hyperref[#1]{\textcolor{black}{(}\textcolor{blue}{\bf #2}\textcolor{black}{)}}}
\newcommand\RR{\mathbb{R}}
\newcommand\NN{\mathbb{N}}
\newcommand{\al}{\alpha}
\newcommand{\de}{\delta}
\newcommand{\ve}{\varepsilon}
\newcommand{\tht}{\theta}
\newcommand{\ep}{\epsilon}
\newcommand{\la}{\lambda}
\newcommand{\Om}{\Omega}
\newcommand{\La}{\Lambda}
\DeclareMathOperator{\dv}{div}
\DeclareMathOperator{\loc}{loc}
\newcommand{\iprod}[2]{\langle #1,#2\rangle}
\newcommand{\abs}[1]{\left| #1\right|}
\newcommand{\lbr}[1][(]{\left#1}
\newcommand{\rbr}[1][)]{\right#1}
\newcommand{\txt}[1]{\qquad \text{#1} \qquad}
\newcommand{\rhot}{\tilde{\rho}}
\newcommand{\thtt}{\tilde{\tht}}
\newcommand{\ukm}[1]{(u-#1)_+}
\newcounter{whitney}
\newcounter{ineqcounter}
\def\ps@pprintTitle{%
\let\@oddhead\@empty
\let\@evenhead\@empty
\def\@oddfoot{}%
\let\@evenfoot\@oddfoot}
\begin{document}

\begin{frontmatter}

\title{Uniform boundedness for weak solutions of quasilinear parabolic equations}

\author[myaddress]{Karthik Adimurthi\corref{mycorrespondingauthor}\tnoteref{thanksfirstauthor}}
\cortext[mycorrespondingauthor]{Corresponding author}
\ead{karthikaditi@gmail.com and kadimurthi@snu.ac.kr}
\tnotetext[thanksfirstauthor]{Supported by the National Research Foundation of Korea grant NRF-2015R1A4A1041675.}

\author[myaddresstwo]{Sukjung Hwang\tnoteref{thankssecondauthor}}
\ead{sukjung\_hwang@yonsei.ac.kr and sukjungh@gmail.com}
\tnotetext[thankssecondauthor]{Supported by Basic Science Research Program
        through the National Research Foundation of Korea(NRF) funded
        by the Ministry of Education, Science and Technology(2017R1D1A1B03035152).}

\address[myaddress]{Department of Mathematical Sciences, Seoul National University, Seoul 08826, Republic of Korea.}
\address[myaddresstwo]{Department of Mathematics, Yonsei University, Seoul 03722, Republic of Korea.}

\begin{abstract}
In this paper, we study the boundedness of weak solutions to quasilinear parabolic equations of the form
\[
    u_t - \dv \aa(x,t,\nabla u) = 0,
\]
where the nonlinearity $\aa(x,t,\nabla u)$ is modelled after the well studied $p$-Laplace operator. The question of boundedness has received lot of attention over the past several decades with the existing literature showing that weak solutions in either $\frac{2N}{N+2}<p<2$, $p=2$ or $2<p$ are bounded. The proof is essentially split into three cases mainly because the estimates that have been obtained in the past always included an exponent of the form $\frac{1}{p-2}$ or $\frac{1}{2-p}$ which blows up as $p \rightarrow 2$.  In this note, we prove the boundedness of weak solutions in the full range $\frac{2N}{N+2} < p < \infty$ without having to consider the singular and degenerate cases separately. Subsequently, in a slightly smaller regime of $\frac{2N}{N+1} < p < \infty$, we also prove an improved boundedness estimate.
\end{abstract}

\begin{keyword}
boundedness \sep quasilinear parabolic equations \sep $p$-Laplace operators.
 \MSC[2010] 35B45 \sep 35K59.
\end{keyword}

\end{frontmatter}


\section{Introduction}
In this paper, we study weak solutions $u \in L^p(-T,T; W^{1,p}_{\loc}(\Om))$ of
\begin{equation}
    \label{main}
    u_t - \dv \aa(x,t,\nabla u) = 0 \txt{in} \mathcal{D}'(\Om_T),
\end{equation}
where $\aa(x,t,\nabla u)$ is modelled after the well known $p$-Laplace operator. More specifically, we assume that the nonlinear structure satisfies the following growth and coercivity conditions for some $p>1$ and positive constants $0<\La_0\leq \La_1<\infty$:
\begin{equation*}
\iprod{\aa(x,t,\nabla u)}{\nabla u} \geq \La_0 |\nabla u|^p \txt{and} |\aa(x,t,\nabla u)| \leq \La_1 |\nabla u|^{p-1}.
\end{equation*}

Over the past several decades, there has been much progress made regarding the regularity of weak solutions, but we shall refrain from giving a comprehensive history regarding the development of boundedness estimates for solutions of \cref{main} and refer to \cite[Section 18 of Chapter V]{DiBenedetto} and references therein for more about the history of the problem.

The boundedness in existing literature takes the following form:
\begin{theorem}[Degenerate case]
\label{deg_bound}
    Here we have $p >2$ and let $\sigma \in (0,1)$ be given and $\theta, \rho \in (0,\infty)$ be any two positive constants. Then for any $\ve \in (0,2]$, any non-negative weak solution $u \in L^p(-T,T;W^{1,p}_{\loc}(\Om))$ of \cref{main} satisfies
    \[
        \sup_{Q_{\sigma\rho,\sigma\tht}} |u| \leq C(N,p,\La_0,\La_1,\ve) \frac{\lbr \frac{\tht}{\rho^p} \rbr^{\frac{1}{\ve}}}{(1-\sigma)^{\frac{N+p}{\ve}}} \lbr \fiint_{Q_{\rho,\tht}} |u|^{p-2+\ve} \ dz \rbr^{\frac{1}{\ve}} \bigwedge \lbr \frac{\rho^p}{\tht}\rbr^{\frac{1}{p-2}}.
    \]
\end{theorem}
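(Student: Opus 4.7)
My plan is to run a De~Giorgi iteration on the super-level sets of $u$ along a shrinking family of cylinders converging to $Q_{\sig\rh,\sig\tht}$. Introduce
\[
\rh_j := \sig\rh + (1-\sig)\rh\, 2^{-j}, \qquad \tht_j := \sig\tht + (1-\sig)\tht\, 2^{-j}, \qquad k_j := k(1-2^{-j}),
\]
for a threshold $k>0$ to be chosen at the end, and form the iteration quantity $Y_j := \fiint_{Q_{\rh_j,\tht_j}} (u-k_j)_+^{p-2+\ve}\,dz$. The target is a recursion of the form $Y_{j+1} \le C\,b^j\, Y_j^{1+\de}$ with $\de = \de(N,p,\ve)>0$ bounded away from zero uniformly in $p$, so that the standard fast-convergence lemma forces $Y_\infty = 0$ whenever $Y_0$ lies below an explicit threshold that depends on $k$. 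Inverting that threshold gives the admissible values of $k$ and hence the bound on $\sup u$.

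The two ingredients driving the recursion are parabolic Caccioppoli and parabolic Sobolev. Testing \cref{main} against $(u-k_j)_+\,\vp_j^p$ with a smooth cutoff $\vp_j$ equal to $1$ on $Q_{\rh_{j+1},\tht_{j+1}}$ and vanishing near $\pa Q_{\rh_j,\tht_j}$, combined with the coercivity and growth conditions on $\aa$, produces
\[
\sup_t\!\int (u-k_j)_+^2\vp_j^p\,dx + \iint|\nabla (u-k_j)_+|^p\vp_j^p\,dz \;\le\; C\,b^{jp}\lbr\frac{\iint_{Q_{\rh_j,\tht_j}} (u-k_j)_+^p\,dz}{[(1-\sig)\rh]^p} + \frac{\iint_{Q_{\rh_j,\tht_j}} (u-k_j)_+^2\,dz}{(1-\sig)\tht}\rbr.
\]
The parabolic Gagliardo--Nirenberg inequality $\iint v^{p(N+2)/N}\,dz \le C(\sup_t\!\int v^2\,dx)^{p/N}\iint|\nabla v|^p\,dz$ applied to $v=(u-k_{j+1})_+$ then converts energy into integrability, giving a reverse-type estimate between successive truncations.

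The device that keeps the argument uniform through $p = 2$ is the level-shift: on $\{(u-k_{j+1})_+>0\}$ one has $u-k_j \ge k\,2^{-(j+1)}$, so for every $s \ge p-2+\ve$,
\[
(u-k_j)_+^{s}\le \lbr k\,2^{-(j+1)}\rbr^{s-(p-2+\ve)}(u-k_j)_+^{p-2+\ve}.
\]
Since $\ve>0$ is fixed independently of $p$, every excess power is traded for a \emph{positive} power of $k$; no factor of the form $1/(p-2)$ appears anywhere. Applying this to both the $p$-power and the $2$-power on the right-hand side of Caccioppoli, and combining with Sobolev, yields the desired recursion $Y_{j+1} \le C\,b^j\, k^{-\mu}\,(\tht/\rh^p)^{\nu}\, Y_j^{1+\de}$ with explicit $\mu,\nu,\de>0$ depending only on $N,p,\ve$.

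Running the iteration, $Y_\infty = 0$ is achieved once $k^\ve \gtrsim (1-\sig)^{-(N+p)}(\tht/\rh^p)\, Y_0$, and solving for the smallest admissible $k$ reproduces the first expression on the right-hand side of the theorem. The competing intrinsic bound $(\rh^p/\tht)^{1/(p-2)}$ follows by repeating the iteration with $k$ chosen at the intrinsic scale $k^{p-2}\tht \sim \rh^p$: in that regime the two summands on the right-hand side of Caccioppoli are directly comparable through the degenerate scaling of the equation, and the iteration closes independently of $Y_0$. Taking the smaller of the two admissible choices of $k$ produces the $\bigwedge$ in the statement. The main obstacle I anticipate is the bookkeeping of the constants and of the gain exponent $\de$ so that neither degenerates as $p\to 2^+$; this is precisely the point of keeping $\ve\in(0,2]$ free and is what allows one to bypass the classical split between $p>2$ and $p<2$.
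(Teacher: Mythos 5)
The paper does not prove \cref{deg_bound}; it is stated as background, cited to DiBenedetto's monograph \cite[Ch.~V]{DiBenedetto}, and serves only as a foil for the paper's own \cref{main_thm1,main_thm2}, which \emph{are} proved. So there is no ``paper's proof'' to match line by line, but the paper's \cref{F:imp_bound} gives a closely related De~Giorgi iteration against which your sketch can be checked, and that comparison reveals a genuine gap.

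\textbf{The level-shift inequality is backwards, and the error is structural, not typographical.} You write that on $\{u > k_{j+1}\}$ one has $u-k_j\ge k2^{-(j+1)}$, and conclude that for $s\ge p-2+\ve$,
\[
(u-k_j)_+^{s}\le \lbr k\,2^{-(j+1)}\rbr^{s-(p-2+\ve)}(u-k_j)_+^{p-2+\ve}.
\]
But with $s-(p-2+\ve)\ge 0$ and a \emph{lower} bound on $(u-k_j)_+$, the exponent monotonicity runs the other way:
\[
(u-k_j)_+^{s}= (u-k_j)_+^{\,s-(p-2+\ve)}\,(u-k_j)_+^{p-2+\ve} \;\ge\; \lbr k\,2^{-(j+1)}\rbr^{s-(p-2+\ve)}(u-k_j)_+^{p-2+\ve}.
\]
This cannot be repaired by swapping the hypothesis to $s\le p-2+\ve$, because the term you actually need to handle, the $p$-power term $\iint (u-k_{j+1})_+^p|\nabla\zeta|^p$ in Caccioppoli, has $s=p > p-2+\ve$ whenever $\ve<2$. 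In other words, you are trying to control a \emph{higher} $L^q$-norm of the truncation by your iteration quantity $Y_j=\fiint(u-k_j)_+^{p-2+\ve}$, and neither H\"older's inequality nor the level-shift works in that direction without an a priori sup bound on $u$ --- which is what you are trying to prove. This is exactly the obstruction that the paper's \cref{F:imp_bound} avoids by iterating on $\fiint(u-k_i)_+^{\,p+\ve_0}$ with $p+\ve_0>p$ and $p+\ve_0>2$: then both Caccioppoli terms are controlled by a single H\"older step \emph{upward} in integrability (their \cref{2.11} and \cref{2.12}), each time gaining a bona fide negative power of $k$ via the measure bound \cref{2.10}.

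\textbf{The $\bigwedge$ term is also not produced in the way you describe.} You assert that at the intrinsic scale $k^{p-2}\tht\sim\rho^p$ ``the iteration closes independently of $Y_0$.'' A De~Giorgi fast-geometric-convergence argument always requires a smallness condition on $Y_0$; choosing $k$ to balance the two Caccioppoli terms merges them into one but does not remove the threshold. In the paper's own argument the analogous $\bigwedge$ appears simply because a side constraint ($k\ge 1$ there, $k\ge(\rho^p/\tht)^{1/(p-2)}$ here) is imposed alongside the data-driven choice of $k$, so the final $k$ is the larger of the two candidates; it is not the result of running two independent iterations and taking the smaller admissible $k$.

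\textbf{Summary.} The De~Giorgi scaffolding (shrinking cylinders, Caccioppoli, parabolic Sobolev, geometric convergence) is the right skeleton, and the observation that the $\ve>0$ gap should keep constants from degenerating is the right instinct. But the central step --- converting the $p$-power and $2$-power Caccioppoli terms into the $(p-2+\ve)$-power iteration quantity with a usable negative power of $k$ --- is carried out with an inequality that points the wrong way, and the wanted estimate is in fact unavailable in that direction when $\ve<2$. To actually close the degenerate-case recursion one needs either to iterate on a \emph{higher} power (as in \cref{F:imp_bound}) or to run a preliminary qualitative boundedness argument and then extract the $(p-2+\ve)$-form by interpolation; your proposal as written does neither.
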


\begin{theorem}[Singular case]
\label{sing_bound}
Here, we take $1<p<2$ and set $\la_r := N(p-2) + rp$ for some $r \geq 1$ such that $\la_r >0$. Then any non-negative  weak solution $u \in L^p(-T,T;W^{1,p}_{\loc}(\Om))$ of \cref{main} satisfies
    \[
        \sup_{Q_{\sigma\rho,\sigma\tht}} |u| \leq C(N,p,\La_0,\La_1,\ve) \frac{\lbr \frac{\rho^p}{\tht} \rbr^{\frac{N}{\la_r}}}{(1-\sigma)^{\frac{p(N+p)}{\la_r}}} \lbr \fiint_{Q_{\rho,\tht}} |u|^{r} \ dz \rbr^{\frac{p}{\la_r}} \bigwedge \lbr \frac{\tht}{\rho^p}\rbr^{\frac{1}{2-p}},
    \]
    where $\sigma \in (0,1)$ and $\rho,\tht \in (0,\infty)$ are  positive constants.
\end{theorem}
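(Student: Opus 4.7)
The plan is a De Giorgi / Moser truncation--iteration adapted to the singular range $1<p<2$, run on the nested cylinders
\[
    Q_n := Q_{\rho_n,\tht_n}, \qquad \rho_n := \sigma\rho + (1-\sigma)\rho\,2^{-n}, \qquad \tht_n := \sigma\tht + (1-\sigma)\tht\,2^{-n},
\]
truncated at levels $k_n := k(1-2^{-n-1})$ for some $k>0$ to be determined at the end. Taking cut-offs $\phi_n \in C^\infty_c(Q_{n-1})$ equal to $1$ on $Q_n$ with $|\nabla\phi_n|^p \lesssim 2^{np}/((1-\sigma)\rho)^p$ and $|\pa_t\phi_n| \lesssim 2^n/((1-\sigma)\tht)$, testing \cref{main} against $(u-k_n)_+ \phi_n^p$ and using the coercivity and growth assumptions on $\aa$ yields the standard Caccioppoli inequality
\[
    \sup_t \int (u-k_n)_+^2 \phi_n^p \, dx + \int\!\!\int_{Q_n} |\nabla(u-k_n)_+|^p \phi_n^p \, dz \leq C \int\!\!\int_{Q_{n-1}}\!\!\bgh{(u-k_n)_+^p|\nabla\phi_n|^p + (u-k_n)_+^2|\pa_t\phi_n^p|}\, dz.
\]

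The core difficulty in the singular regime is that the $L^\infty_t L^2_x$ time term is not homogeneous with the $L^p$ gradient term, and naive balancing produces the classical $(2-p)^{-1}$ blow-up. The plan is to bypass this by working under the a priori hypothesis $\|u\|_{L^\infty(Q_{\rho,\tht})} \leq M$, which is initially guaranteed by approximating $u$ by truncated/smoothed solutions and will be removed at the very end by passing to the limit in the final estimate. Under this hypothesis the pointwise bound
\[
    (u-k_n)_+^2 \leq M^{2-p}(u-k_n)_+^p
\]
converts the time term into an $L^p$ term with prefactor $M^{2-p}/\tht$. This prompts the dichotomy that produces the second branch of the theorem: if $M \leq \lbr\tht/\rho^p\rbr^{1/(2-p)}$, then $M^{2-p}/\tht \lesssim 1/\rho^p$ and the time term is absorbed into the gradient term with constants uniform in $p$; otherwise the claimed bound $\lbr\tht/\rho^p\rbr^{1/(2-p)}$ already dominates and there is nothing left to prove.

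With the two terms in the Caccioppoli right-hand side now balanced, combine the estimate with parabolic Gagliardo--Nirenberg applied to $v = (u-k_n)_+ \phi_n$, namely
\[
    \int\!\!\int_{Q_n} |v|^{p(N+2)/N} \, dz \leq C \lbr \sup_t \int |v|^2 \, dx \rbr^{p/N} \int\!\!\int_{Q_n} |\nabla v|^p \, dz,
\]
to upgrade the estimate to an $L^{p(N+2)/N}$ bound on $(u-k_n)_+$. Introducing $Y_n := \fiint_{Q_n} (u-k_n)_+^r \, dz$ and estimating the shrinking level set $\mgh{u > k_{n+1}} \cap Q_{n+1}$ via Chebyshev together with Hölder produces a recurrence of the form
\[
    Y_{n+1} \leq C\, b^n \, (1-\sigma)^{-\alpha_1} \, k^{-\alpha_2} \, \lbr \rho^p/\tht \rbr^{\alpha_3} \, Y_n^{1+\mu},
\]
with explicit positive exponents $\alpha_1,\alpha_2,\alpha_3,\mu$ depending only on $N,p,r$ through $\la_r = N(p-2) + rp$. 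A standard fast-convergence lemma then forces $Y_n \to 0$ provided $Y_0$ obeys a smallness condition, which rewrites as a lower bound on $k$. Minimising $k$ subject to that bound yields precisely the first branch of the stated estimate.

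The main obstacle is the bookkeeping in the iteration: one must check that, after the absorption in the dichotomy and the Chebyshev/Hölder step on the super-level sets, all powers of $\rho$, $\tht$, and $(1-\sigma)$ accumulated through the Caccioppoli/Sobolev combination assemble into exactly the exponents $N/\la_r$ and $p(N+p)/\la_r$ prescribed by the theorem, uniformly as $p \to 2^-$. The passage from bounded approximants back to the general weak solution at the end is routine but essential, since the whole argument relied on the a priori hypothesis $\|u\|_{L^\infty} \leq M$ to trigger the $M^{2-p}$ absorption trick.
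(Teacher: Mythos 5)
Note first that the paper does not prove \cref{sing_bound}: it quotes the result from \cite{DiBenedetto} and only proves \cref{main_thm1,main_thm2}, so your proposal has to stand on its own. The skeleton you set up---nested cylinders, the Caccioppoli inequality from the structure conditions, parabolic Gagliardo--Nirenberg, Chebyshev on super-level sets, and a fast-geometric-convergence lemma---is the right machinery, and the pointwise absorption $(u-k_n)_+^2\le M^{2-p}(u-k_n)_+^p$ is a standard and legitimate device in the singular range. The gap is in the dichotomy you build around it. You split on whether the \emph{a priori} bound $M\ge\sup_{Q_{\rho,\tht}}u$ satisfies $M\le(\tht/\rho^p)^{1/(2-p)}$, and in the opposite case you declare ``the claimed bound $(\tht/\rho^p)^{1/(2-p)}$ already dominates and there is nothing left to prove.'' That is not a valid step. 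The theorem claims $\sup u$ is controlled by the larger of the two branches; knowing $M>(\tht/\rho^p)^{1/(2-p)}$ says nothing about $\sup u\le(\tht/\rho^p)^{1/(2-p)}$, and it is precisely in this regime---large data, large $L^\infty$ norm---that the first branch must be established. But your absorption requires $M^{2-p}/\tht\lesssim 1/\rho^p$, i.e.\ exactly $M\le(\tht/\rho^p)^{1/(2-p)}$; in the case you dismiss, the time term $M^{2-p}\tht^{-1}\iint(u-k_n)_+^p$ \emph{dominates} the gradient term rather than being absorbed by it, and the recurrence you describe never gets off the ground.

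A correct argument has to do one of two things. Either keep both Caccioppoli terms in the recurrence, so that the smallness condition on $Y_0$ produces two competing powers of $k$, one of which forces $k\gtrsim(\tht/\rho^p)^{1/(2-p)}$---this is how the two branches arise organically in \cite{DiBenedetto}, and it is exactly the balancing the paper itself discusses around Equation (12.2) of Section V there. Or run the iteration with $M$ as a free parameter, extract an interpolation bound $\sup_{Q'}u\lesssim(\text{data})\,M^{\gamma}$ with $0<\gamma<1$ and $M=\sup_Q u$ over a slightly larger cylinder $Q\supset Q'$, and then close via Young's inequality and a second iteration over shrinking cylinders---the strategy the paper uses in the proof of \cref{main_thm2}. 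Your proposal does neither, so even granting all the bookkeeping, the argument does not reach the stated estimate. Two secondary points: the phrase ``approximating $u$ by truncated/smoothed solutions'' does not by itself justify finiteness of $M$, since truncating a weak solution does not yield a weak solution of a comparable equation; qualitative local boundedness in the singular range must be secured separately (for instance by iterating from inner cylinders where boundedness is already known), and this should be made explicit. And you never actually verify that the accumulated powers of $\rho$, $\tht$, $(1-\sigma)$, and $k$ assemble into $N/\la_r$, $p(N+p)/\la_r$, and $p/\la_r$; given that the whole point is that these exponents are delicate, that check cannot be left as ``bookkeeping.''
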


From \cref{deg_bound} and \cref{sing_bound}, we see that the estimates are  unstable as $p \rightarrow 2$ and hence the proofs are different in the singular and degenerate regimes. In this paper we overcome this trichotomy by proving two boundedness results, both of which are stable with respect to $p \rightarrow 2$.  The first boundedness result is the following:
\begin{theorem}
\label{main_thm1}
Let $\frac{2N}{N+2}<p<\infty$ and $\ve_0:= \frac{4}{N+2}$. Let $\sigma \in (0,1)$ and $\rho,\tht \in (0,\infty)$ be given, then any non-negative weak solution $u \in L^p(-T,T;W^{1,p}_{\loc}(\Om))$ of \cref{main} satisfies
    \[
\sup_{Q_{\sigma \rho,\sigma \tht}} u  \leq   \mathbf{C} (N,p,\lamot) \frac{\aa^{\frac{p(N+2)}{2(p(N+2)-2N)}}}{(1-\sigma)^{\frac{p(N+p)(N+2)}{2\left(p(N+2)-2N\right)}}}  \lbr \fiint_{Q_{\rho,\tht}}|u|^{p+\ve_0} \,dz\rbr^{\frac{p(N+2)}{2(p(N+2)-2N)}} \bigwedge 1.
\]
Here we have set 
\begin{equation}
\label{def_aa_func}
    \aa := \lbr \frac{\tht}{\rho^p} \rbr + \lbr \frac{\rho^p}{\tht} \rbr^{\frac{N}{p}}.
\end{equation}
\end{theorem}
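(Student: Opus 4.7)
My plan is to establish \cref{main_thm1} through a single De~Giorgi level-set iteration performed on the integral $\iint(u-k)_+^{p+\ve_0}\,dz$, unified across $p<2$, $p=2$, and $p>2$. The pivotal observation is that $\ve_0=\frac{4}{N+2}$ is precisely the smallest value for which $p+\ve_0\geq 2$ whenever $p>\frac{2N}{N+2}$, while $p+\ve_0\geq p$ holds automatically. A single $L^{p+\ve_0}$ quantity therefore dominates both the $L^p$ spatial-cutoff term and the $L^2$ time-cutoff term generated by the energy inequality, which sidesteps the $\frac{1}{|p-2|}$ factor that forces the separate treatment of the singular and degenerate regimes in \cref{deg_bound} and \cref{sing_bound}.

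The first step is a standard Caccioppoli estimate: testing a Steklov-averaged version of \cref{main} against $(u-k)_+\eta^p$, where $\eta$ is a product of spatial and temporal cutoffs between two concentric cylinders $Q_{\rho_2,\tht_2}\subset Q_{\rho_1,\tht_1}$, combined with Young's inequality and the structure conditions, produces
\[
\sup_t\int(u-k)_+^2\eta^p\,dx + \iint|\nabla(u-k)_+|^p\eta^p\,dz \,\lesssim\, \frac{1}{(\rho_1-\rho_2)^p}\iint_{Q_{\rho_1,\tht_1}\cap A_k}(u-k)_+^p\,dz + \frac{1}{\tht_1-\tht_2}\iint_{Q_{\rho_1,\tht_1}\cap A_k}(u-k)_+^2\,dz,
\]
with $A_k=\{u>k\}$ and constants depending only on $N,p,\lamot$. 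I then combine this with the parabolic Sobolev embedding $\iint v^{p(N+2)/N}\,dz \lesssim (\sup_t\int v^2\,dx)^{p/N}\iint|\nabla v|^p\,dz$ applied to $v=(u-k)_+\eta$, and use Hölder's inequality to bound both cost terms: $\iint(u-k)_+^p\leq\lbr\iint(u-k)_+^{p+\ve_0}\rbr^{p/(p+\ve_0)}|A_k|^{\ve_0/(p+\ve_0)}$ and $\iint(u-k)_+^2\leq\lbr\iint(u-k)_+^{p+\ve_0}\rbr^{2/(p+\ve_0)}|A_k|^{(p+\ve_0-2)/(p+\ve_0)}$, the second exponent being $\geq 1$ exactly when $p>\frac{2N}{N+2}$. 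Invoking $|A_k|\leq(k-\ell)^{-(p+\ve_0)}\iint(u-\ell)_+^{p+\ve_0}$ for $\ell<k$, and passing once more through Hölder from the $L^{p(N+2)/N}$ Sobolev output to the level-set integral, yields a self-improving inequality of the form
\[
\iint_{Q_{\rho_2,\tht_2}}(u-k)_+^{p+\ve_0}\,dz \,\leq\, \frac{\mathbf{C}\,\aa^{\al_1}}{(\rho_1-\rho_2)^{\al_2}(\tht_1-\tht_2)^{\al_3}(k-\ell)^{\al_4}}\lbr\iint_{Q_{\rho_1,\tht_1}}(u-\ell)_+^{p+\ve_0}\,dz\rbr^{1+\ka},
\]
with $\ka=\frac{2N}{p(N+2)}>0$ supplying the integrability gain and with $\aa$ from \cref{def_aa_func} arising naturally as the combined space-time cost of the cutoffs after the Hölder steps.

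The iteration itself is then routine. I take $\rho_n=\sig\rho+(1-\sig)\rho\,2^{-n}$, $\tht_n$ chosen analogously so that $Q_{\rho_n,\tht_n}\downarrow Q_{\sig\rho,\sig\tht}$, and $k_n=k(1-2^{-n})$, and set $Y_n:=\iint_{Q_{\rho_n,\tht_n}}(u-k_n)_+^{p+\ve_0}\,dz$. The recursion $Y_{n+1}\leq \mathbf{C}\,b^n\,k^{-\al_4}Y_n^{1+\ka}$ (times appropriate powers of $\aa$ and $(1-\sig)^{-1}$) together with the classical geometric-decay lemma forces $Y_n\to 0$ as soon as $Y_0$ lies below an explicit threshold. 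Solving for the smallest admissible $k$ produces $k\lesssim \mathbf{C}\,\aa^{\Th}(1-\sig)^{-(N+p)\Th}\lbr\fiint_{Q_{\rho,\tht}}|u|^{p+\ve_0}\,dz\rbr^{\Th}$ with $\Th=\frac{p(N+2)}{2(p(N+2)-2N)}$, which matches the claimed exponents on $\aa$, on $(1-\sig)^{-1}$, and on the $L^{p+\ve_0}$ average; the wedge with $1$ accounts for the trivial regime in which the iteration threshold is already reached by an implicit normalization.

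The main obstacle will be the algebraic bookkeeping: verifying that the two a priori distinct contributions $\frac{\tht}{\rho^p}$ and $\lbr\frac{\rho^p}{\tht}\rbr^{N/p}$ in $\aa$ come out with the same final exponent $\Th$ requires carefully tracking how the spatial cost $(\rho_1-\rho_2)^{-p}$ and the temporal cost $(\tht_1-\tht_2)^{-1}$ interact through the two-sided Hölder and Sobolev steps, and that the resulting exponent on the $L^{p+\ve_0}$ integral lands precisely on $\Th$ rather than on some neighboring value. Conceptually, however, the proof reduces to a single one-cylinder iteration that proceeds identically in the singular, linear, and degenerate cases.
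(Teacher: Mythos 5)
Your proposal follows essentially the same route as the paper's proof: the double-H\"older trick dominating both the $L^p$ spatial-cutoff term and the $L^2$ time-cutoff term by a single $L^{p+\ve_0}$ quantity (with $\ve_0=\frac{4}{N+2}$ chosen exactly so that $p+\ve_0>2$ for all $p>\frac{2N}{N+2}$), the parabolic Sobolev embedding applied to $(u-k)_+\eta$, the level-set measure bound, and a single De~Giorgi iteration closed by the geometric-decay lemma, with $k$ then solved to give the exponent $\Theta=\frac{p(N+2)}{2(p(N+2)-2N)}$ and the quantity $\aa$. One small slip: the iteration gain is $\ka=\frac{p}{N}\cdot\frac{p+\ve_0}{q}=\frac{p+\ve_0}{N+2}$ (not $\frac{2N}{p(N+2)}$), and the second H\"older exponent $\frac{p+\ve_0-2}{p+\ve_0}$ needs to be positive rather than $\geq 1$; neither affects the argument since your final exponent $\Th$ and overall structure are correct.
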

 
 From \cref{par_sob_emb}, we see that in the range $\frac{2N}{N+2}<p<\infty$, any weak solution $u \in L^p(-T,T;W^{1,p}_{\loc}(\Om))$ is actually in $L^{p\frac{N+2}{N}}(\Om_T)$ and the choice of $\ve_0 = \frac{4}{N+2}$ is taken such that 
 \[
     p+ \frac{4}{N+2} < p \frac{N+2}{N} \txt{and} 2 < p+ \frac{4}{N+2}.
 \]
 
Subsequently, using a second iteration, we are able to prove the following ameliorated estimate:
\begin{theorem}
    \label{main_thm2}
In the range $\frac{2N}{N+1}<p<\infty$,  let $\sigma \in (0,1)$ and $\rho,\tht \in (0,\infty)$ be given, then any non-negative weak solution $u \in L^p(-T,T;W^{1,p}_{\loc}(\Om))$ of \cref{main} satisfies
    \[
\sup_{Q_{\sigma \rho, \sigma \tht}} u \leq  \mathbf{C} (N,p,\lamot)  \frac{ \aa^{\frac{p(N+1)}{2N(p-1)}}}{\sigma^{\frac{p(N+1)^2}{2N(p-1)}}(1-\sigma)^{\frac{p(N+p)(N+1)}{2N(p-1)} }}\lbr \fiint_{Q_{\rho,\tht}}|u|^{p} \,dz\rbr^{\frac{p(N+1)}{2N(p-1)}}   \bigwedge 1,
\]
where $\aa$ is as defined in \cref{def_aa_func}.
\end{theorem}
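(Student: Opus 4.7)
I would iterate Theorem~\ref{main_thm1} along a shrinking sequence of concentric parabolic cylinders, combined with a pointwise $L^{\infty}$--$L^p$ interpolation that trades the $L^{p+\ve_0}$ integrand appearing on the right-hand side of Theorem~\ref{main_thm1} for an $L^p$ integrand. The cost is a contraction of the admissible range of $p$ from $\frac{2N}{N+2}$ to $\frac{2N}{N+1}$, which will reappear as the precise algebraic threshold under which the iteration closes.

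Fix $\sigma\in(0,1)$. For each integer $k\geq 0$ set
\[
\sigma_k := \sigma + (1-\sigma)(1-2^{-k}),\qquad Q_k := Q_{\sigma_k\rho,\sigma_k\tht},\qquad M_k := \sup_{Q_k}u,
\]
so $\sigma_0=\sigma$, $\sigma_k\nearrow 1$, $Q_k\nearrow Q_{\rho,\tht}$ and $\sigma_{k+1}-\sigma_k=(1-\sigma)2^{-(k+1)}$. Applying Theorem~\ref{main_thm1} to each pair $Q_k\subset Q_{k+1}$ (with rescaling factor $\sigma_k/\sigma_{k+1}$), and rewriting $\aa(\sigma_{k+1}\rho,\sigma_{k+1}\tht)$ in terms of the original $\aa$ and a power of $\sigma_{k+1}\in[\sigma,1]$, delivers the one-step estimate
\[
M_k \leq \mathbf{C}\,2^{k\beta_1}\,\frac{\aa^{\gamma_1}}{\sigma^{\delta_0}(1-\sigma)^{\beta_1}}\lbr\fiint_{Q_{k+1}}|u|^{p+\ve_0}\,dz\rbr^{\gamma_1},
\]
with $\gamma_1=\frac{p(N+2)}{2(p(N+2)-2N)}$, $\beta_1=\frac{p(N+p)(N+2)}{2(p(N+2)-2N)}$ and a dimensional exponent $\delta_0$.

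Splitting $|u|^{p+\ve_0}=|u|^p\cdot|u|^{\ve_0}$ and bounding $|u|^{\ve_0}\leq M_{k+1}^{\ve_0}$ pointwise on $Q_{k+1}$, then enlarging the average back to $Q_{\rho,\tht}$ at the cost of one more $\sigma$-dependent factor, turns the one-step estimate into the multiplicative recurrence
\[
M_k \leq \mathbf{C}\,2^{k\beta_1}\,\frac{\aa^{\gamma_1}}{\sigma^{\delta}(1-\sigma)^{\beta_1}}\,M_{k+1}^{\mu}\lbr\fiint_{Q_{\rho,\tht}}|u|^{p}\,dz\rbr^{\gamma_1},\qquad \mu:=\ve_0\gamma_1=\frac{2p}{p(N+2)-2N}.
\]
The plan is to iterate this recurrence in $k$, absorb the factor $M_{k+1}^{\mu}$ back into the left-hand side via Young's inequality, and sum the resulting geometric and arithmetic-geometric series. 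Matching the cumulative exponents on $\aa$, $(1-\sigma)^{-1}$, $\sigma^{-1}$ and $\fiint|u|^p\,dz$ to the values asserted in Theorem~\ref{main_thm2} is then purely an algebraic check, yielding $\gamma_2=\frac{p(N+1)}{2N(p-1)}$, $\beta_2=\frac{p(N+p)(N+1)}{2N(p-1)}$ and $\delta_2=\frac{p(N+1)^2}{2N(p-1)}$.

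The main obstacle is closing the iteration for the full range $p>\frac{2N}{N+1}$. A direct Young-style geometric absorption requires $\mu<1$, equivalent to $p>2$, which is strictly smaller than the advertised range. Covering the subrange $\frac{2N}{N+1}<p\leq 2$, where $\mu\geq 1$, forces one to invoke the $\bigwedge 1$ clause of Theorem~\ref{main_thm1}, which provides the a priori bound $M_{k+1}\leq 1$ at each step and therefore allows $M_{k+1}^{\mu}$ to be controlled by $M_{k+1}^{\mu'}$ for a suitably chosen $\mu'\in(0,1)$. The sharp threshold $p>\frac{2N}{N+1}$ will emerge exactly as the algebraic condition under which such a $\mu'$ can be chosen compatibly with the required exponent matching, so that the absorption closes and the final exponent $\frac{p(N+1)}{2N(p-1)}$ is attained.
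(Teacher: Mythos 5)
Your iteration framework matches the paper's in spirit (increasing sequence of concentric cylinders, the split $|u|^{p+\ve_0}=|u|^p\,|u|^{\ve_0}$ with pointwise sup control, Young absorption, geometric summation), but there is a genuine gap: you iterate \cref{main_thm1} as a black box with its fixed $\ve_0=\tfrac{4}{N+2}$, whereas the paper's proof of \cref{main_thm2} re-runs the \cref{sec3} machinery with a \emph{different} choice $\ve_0=\tfrac{2}{N+1}$ (see \cref{3.1}). The choice of $\ve_0$ feeds into both the absorption exponent $\mu=\ve_0\gamma_1$ and the final exponent $\gamma_1/(1-\mu)$ on $\fiint|u|^p$, so this is not a cosmetic difference. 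With your $\ve_0=\tfrac{4}{N+2}$, you correctly find $\mu=\tfrac{2p}{p(N+2)-2N}<1$ only for $p>2$; and even in that range the cumulative exponent is $\tfrac{\gamma_1}{1-\mu}=\tfrac{p(N+2)}{2N(p-2)}$, which does not match the claimed $\tfrac{p(N+1)}{2N(p-1)}$. Your proposed $\bigwedge 1$ workaround, replacing $M_{k+1}^{\mu}$ by $M_{k+1}^{\mu'}$ with $\mu'\in(0,1)$, does not rescue the range either: forcing the final exponent to equal $\tfrac{p(N+1)}{2N(p-1)}$ pins down $\mu'=\tfrac{p(N+2)-N^2}{(p(N+2)-2N)(N+1)}$, and $\mu'>0$ requires $p>\tfrac{N^2}{N+2}$. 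Since $\tfrac{N^2}{N+2}>\tfrac{2N}{N+1}$ whenever $N\geq 3$, the subrange $\tfrac{2N}{N+1}<p\leq\tfrac{N^2}{N+2}$ is not covered in higher dimensions, so the threshold you predicted does not ``emerge.''

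The paper's mechanism is to take $\ve_0=\tfrac{2}{N+1}$ from the start. Then $\mu=\tfrac{p}{p(N+1)-N}$ is automatically $<1$ for all $p>1$, so the Young absorption closes without any case split or appeal to $\bigwedge 1$; the only surviving constraint from \cref{def_ve_0} is $p+\ve_0>2$, which is precisely $p>\tfrac{2N}{N+1}$; and the algebra gives $\gamma_1/(1-\mu)=\tfrac{p(N+1)}{2N(p-1)}$ on the nose. In short, the missing idea is that the exponent $\ve_0$ in the one-step estimate must itself be re-tuned before iterating — \cref{main_thm1} as stated is the wrong starting point.
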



The main idea of our proof lies in the parabolic Sobolev embedding, which gives an improved integrability for the weak solution for free. The older proofs of \cref{deg_bound,sing_bound} only requires the finiteness of $u$ in  $L^p(-T,T;L^p_{\loc}(\Om))$, whereas we make use of all the available information, i.e., we make use of  the finiteness of $u$ in the function space $L^p(-T,T;W^{1,p}_{\loc}(\Om))$. This additional information enables us to estimate the two contrasting terms, one with $p$ growth and the other with $2$ growth uniformly (see \cref{2.11} and \cref{2.12} for how the estimates work).  The exponent $\ve_0$ can be viewed as the positive gap between $2$ and the Sobolev exponent $q = p \frac{N+2}{N}>2$ and it is this gap that plays a crucial role in our proof.

\section{Preliminaries}

In this section, we shall collect all the preliminary material needed in subsequent sections. For any $1<p<\infty$ and any $m > 1$, we define the following Banach spaces:
\begin{equation*}
\begin{array}{c}
V^{m,p}(\Om_T) := L^{\infty}(-T,T;L^m(\Om)) \cap L^p(-T,T;W^{1,p}(\Om)),\\
V^{m,p}_0(\Om_T) := L^{\infty}(-T,T;L^m(\Om)) \cap L^p(-T,T;W^{1,p}_0(\Om)).
\end{array}\end{equation*}

We have the following parabolic Sobolev embedding theorem from \cite[Proposition 3.1 from Section I]{DiBenedetto}.
\begin{lemma}
\label{par_sob_emb}
For any $u \in V^{2,p}_0(\Om_T)$, there exists a constant $C = C(N,p)$ such that
\[
\iint_{\Om_T} |u(x,t)|^q \,dx\,dt \apprle C \lbr \sup_{0<t<T} \int_{\Om} |u(x,t)|^2 \,dx \rbr^{\frac{p}{N}} \lbr \iint_{\Om_T} |\nabla u(x,t)|^p \,dx\,dt \rbr,
\]
where $q:= p \frac{N+2}{N}$.
\end{lemma}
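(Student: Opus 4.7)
The plan is to reduce the claim to a slicewise spatial estimate combined with an integration in time. Fix $t \in (-T,T)$. Since $u \in V^{2,p}_0(\omt)$, the slice $u(\cdot,t)$ belongs to $W^{1,p}_0(\Om)$ for a.e.~$t$, so the standard Sobolev embedding $W^{1,p}_0(\Om) \hookrightarrow L^{p^*}(\Om)$ with $p^* = \frac{Np}{N-p}$ is available when $p<N$; in the complementary range $p \ge N$ one may use instead $W^{1,p}_0 \hookrightarrow L^{\tilde q}$ for any sufficiently large finite $\tilde q$. The target exponent $q = p\tfrac{N+2}{N}$ satisfies $2 \le q \le p^*$ precisely when $p \ge \frac{2N}{N+2}$, which is exactly the operating range of the paper.

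The key spatial step is to interpolate $L^q$ between $L^2$ and $L^{p^*}$. Choosing $\lambda = \frac{2}{N+2} \in (0,1)$, one verifies the scaling identity $\frac{1}{q} = \frac{\lambda}{2} + \frac{1-\lambda}{p^*}$, and hence the classical interpolation inequality gives
\begin{equation*}
\lbr \int_\Om |u(x,t)|^q \,dx \rbr^{\frac{1}{q}} \le \lbr \int_\Om |u(x,t)|^2 \,dx \rbr^{\frac{\lambda}{2}} \lbr \int_\Om |u(x,t)|^{p^*} \,dx \rbr^{\frac{1-\lambda}{p^*}}.
\end{equation*}
Feeding the Sobolev inequality into the last factor and raising everything to the $q$-th power, using the two algebraic identities $\lambda q = \frac{2p}{N}$ and $(1-\lambda) q = p$, one arrives at the slicewise bound
\begin{equation*}
\int_\Om |u(x,t)|^q \,dx \le C(N,p) \lbr \int_\Om |u(x,t)|^2 \,dx \rbr^{\frac{p}{N}} \int_\Om |\nabla u(x,t)|^p \,dx.
\end{equation*}

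The remaining step is to integrate this pointwise-in-$t$ inequality over $(-T,T)$. The first factor on the right is dominated by the time-uniform quantity $\sup_{-T<s<T} \int_\Om |u(x,s)|^2 \,dx$, which one pulls out of the $t$-integral; the $t$-integral of the remaining factor is precisely $\iint_{\omt} |\nabla u|^p \,dx\,dt$, giving exactly the claimed estimate. There is no real obstacle in the argument; the only delicate point is to notice that the desired scaling forces the interpolation parameter to be $\lambda = \frac{2}{N+2}$ (so that both $\|u\|_{L^2}^{\lambda q}$ and $\|\nabla u\|_{L^p}^{(1-\lambda)q}$ come out with the correct exponents $\frac{2p}{N}$ and $p$), and to confirm that this choice is admissible via $2 \le q \le p^*$, equivalently $p \ge \frac{2N}{N+2}$.
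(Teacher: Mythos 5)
Your slicewise computation in the range $\frac{2N}{N+2} \le p < N$ is correct, and it is essentially the same argument as the one behind the result the paper cites: DiBenedetto's Proposition 3.1 (Chapter I) is proved by applying a multiplicative Gagliardo--Nirenberg inequality on each time slice and then integrating in $t$ after pulling the $\sup$ of the $L^2$ norm out of the time integral; your H\"older interpolation between $L^2$ and $L^{p^*}$ followed by the Sobolev inequality is exactly the derivation of that slicewise inequality when $p<N$, and the algebra ($\lambda=\frac{2}{N+2}$, $\lambda q=\frac{2p}{N}$, $(1-\lambda)q=p$) checks out.

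The genuine gap is your treatment of $p\ge N$, a case the paper really needs since \cref{main_thm1} and \cref{main_thm2} allow all $p<\infty$. The proposed substitute $W^{1,p}_0(\Om)\hookrightarrow L^{\tilde q}(\Om)$ fails on two counts. First, the constant in that embedding necessarily depends on the domain (by scaling it cannot be domain-free), whereas \cref{par_sob_emb} asserts $C=C(N,p)$; this is not cosmetic, because in \cref{sec3} the lemma is applied on cylinders $\tQ_i$ of every size and the dependence on $\rho$ and $\tht$ is tracked explicitly. Second, and more decisively, the exponents in the statement force the interpolation parameters: to produce $\lbr \int_{\Om}|u|^2\,dx\rbr^{p/N}$ and $\int_{\Om}|\nabla u|^p\,dx$ you must have $\lambda q=\frac{2p}{N}$ and $(1-\lambda)q=p$, and then the relation $\frac1q=\frac{\lambda}{2}+\frac{1-\lambda}{\tilde q}$ forces $\tilde q=p^*=\frac{Np}{N-p}$, which is not a positive finite exponent once $p\ge N$; with any other finite $\tilde q$ you obtain an inequality with different powers (and a domain-dependent constant), not the one claimed. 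The repair is to use the full Gagliardo--Nirenberg inequality $\norm{v}_{L^{q}(\RR^N)}\le C(N,p)\,\norm{\nabla v}_{L^p(\RR^N)}^{\frac{N}{N+2}}\norm{v}_{L^2(\RR^N)}^{\frac{2}{N+2}}$, valid for every $1\le p<\infty$ since the exponent $\frac{N}{N+2}<1$ avoids the borderline case, applied to the zero extension of $u(\cdot,t)$ to $\RR^N$ (legitimate because $u(\cdot,t)\in W^{1,p}_0(\Om)$); raising it to the power $q$ and integrating in time gives the lemma in the whole range, which is precisely the content of the cited reference.
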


Let us first define Steklov average as follows: let $h \in (0,2T)$ be any positive number, then we define
\begin{equation*}
  u_{h}(\cdot,t) := \left\{ \begin{array}{ll}
                              \hint_t^{t+h} u(\cdot, \tau) \ d\tau \quad & t\in (-T,T-h), \\
                              0 & \text{else}.
                             \end{array}\right.
 \end{equation*}
We shall now define the notion of weak solutions to \cref{main}.
\begin{definition}[Weak solution]
\label{weak_sol}
    We say that $u \in C^0(-T,T;L^2_{\loc}(\Om)) \cap L^p(-T,T;W^{1,p}_{\loc}(\Om))$ is a weak solution of \cref{main} if,  for any $\phi \in C_c^{\infty}(\Om)$ and any $t \in (-T,T)$,  the following holds:
\begin{equation*}
  \int_{\Om \times \{t\}} \left\{ \frac{d [u]_{h}}{dt} \phi + \iprod{[\aa(x,t,\nabla u)]_{h}}{\nabla \phi} \right\} \,dx = 0 \txt{for any}0 < t < T-h.
\end{equation*}
\end{definition}

\begin{remark}
Since the boundedness result is local in nature, without loss of generality,  we shall assume that all the cylinders are centered at the point $(0,0)$ after suitable translation of the problem. In what follows, we shall use the following notation:
\begin{gather*}
Q_{\rho,\tht}:= B_{\rho} \times [-\tht, \tht].
\end{gather*}
Furthermore, we shall denote $\pa_p Q_{\rho,\tht}$ to denote the parabolic boundary of the cylinder and denote $z= (x,t)$ to be a point in $\RR^{n+1}$.
\end{remark}

Let us first recall the standard energy estimate (see for example \cite[Proposition 3.1 from Section II]{DiBenedetto} for the proof):
\begin{lemma}
\label{energy_est}
Let $u \in C^0(-T,T;L^2_{\loc}(\Om)) \cap L^p(-T,T;W^{1,p}_{\loc}(\Om))$ be a nonnegative, weak solution of \cref{main} in the sense of  \cref{weak_sol}, then for any $k \in \RR$, there exists a constant $C = C(N,p,\lamot)$ such that
\begin{equation*}
\begin{array}{rcl}
\sup_{-\tht<t<\tht} \int\limits_{B_{\rho}} (u-k)_+^2 \zeta^2 \,dx + \iint\limits_{Q_{\rho,\tht}} |\nabla (u-k)_+|^p \zeta^p \,dz  \apprle \iint\limits_{Q_{\rho,\tht}} | (u-k)_+|^p |\nabla \zeta|^p \,dz + \iint\limits_{Q_{\rho,\tht}} |(u-k)_+|^2 \zeta^{p-1} |\pa_t\zeta| \,dz.
\end{array}
\end{equation*}
Here $\zeta \in C^{\infty}(Q_{\rho,\tht})$ is a cut-off function such that $\zeta = 0$ on $\pa_p Q_{\rho,\tht}$ (the parabolic boundary) for all $t \in (-\tht,\tht)$.
\end{lemma}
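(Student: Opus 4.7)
The plan is the standard energy-estimate argument for parabolic equations, carried out at the level of Steklov averages since the weak formulation in \cref{weak_sol} only gives us access to $\ddt{[u]_h}$ and not $u_t$ directly.

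First I would fix $h \in (0, 2T)$ and any $t \in (-\tht+h, \tht)$, and take the test function
\[
    \phi(x,s) = ([u]_h(x,s) - k)_+ \, \zeta(x,s)^p \chi_{(-\tht, t)}(s)
\]
in the weak formulation of \cref{weak_sol} (with a standard mollification in time to legitimize the temporal cutoff, which I would not dwell on). The spatial admissibility $\phi(\cdot,s) \in W_0^{1,p}(B_\rho)$ is guaranteed by $\zeta = 0$ on $\pa_p Q_{\rho,\tht}$. Integrating in the time variable $s$ over $(-\tht, t)$ then yields two contributions.

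Next I would handle the two terms separately. For the parabolic term I would use the identity $\ddt{[u]_h} \cdot ([u]_h - k)_+ = \frac{1}{2} \ddt{} ([u]_h - k)_+^2$ and integrate by parts in time:
\[
\int_{-\tht}^{t} \int_{B_\rho} \ddt{[u]_h} ([u]_h - k)_+ \zeta^p \,dx\,ds = \frac{1}{2} \int_{B_\rho \times \{t\}} ([u]_h - k)_+^2 \zeta^p \,dx - \frac{p}{2} \iint_{Q_{\rho,\tht}} ([u]_h - k)_+^2 \zeta^{p-1} \pa_s\zeta \,dx\,ds,
\]
where the boundary term at $s = -\tht$ vanishes since $\zeta(\cdot,-\tht) \equiv 0$. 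For the spatial term I expand $\nabla \phi = \nabla([u]_h - k)_+ \, \zeta^p + p([u]_h - k)_+ \zeta^{p-1} \nabla \zeta$. Passing $h \downarrow 0$ (using the strong convergence $[u]_h \to u$ in $L^p(-T,T;W^{1,p}_{\loc}(\Om))$ together with $[u]_h \to u$ in $C^0(-T,T;L^2_{\loc}(\Om))$, which is exactly the regularity guaranteed in \cref{weak_sol}) gives the corresponding identity for $u$ and $\aa(x,t,\nabla u)$.

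Now I would apply the structural assumptions. On the set $\{u > k\}$, coercivity yields $\iprod{\aa}{\nabla(u-k)_+} \geq \La_0 |\nabla(u-k)_+|^p$, producing the gradient term on the left-hand side. The cross term is controlled by the growth bound and Young's inequality:
\[
p \iint_{Q_{\rho,\tht}} |\iprod{\aa}{\nabla \zeta}| (u-k)_+ \zeta^{p-1} \,dz \leq p\La_1 \iint_{Q_{\rho,\tht}} |\nabla(u-k)_+|^{p-1} |\nabla\zeta| (u-k)_+ \zeta^{p-1} \,dz \leq \frac{\La_0}{2} \iint_{Q_{\rho,\tht}} |\nabla(u-k)_+|^p \zeta^p \,dz + C(p,\lamot) \iint_{Q_{\rho,\tht}} (u-k)_+^p |\nabla \zeta|^p \,dz.
\]
Absorbing the first summand into the left-hand side and taking the supremum over $t \in (-\tht, \tht)$ in the resulting inequality yields both the $L^\infty_t L^2_x$ term and the $L^p_{t,x}$ gradient term on the left, with precisely the two claimed right-hand-side terms.

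The main technical nuisance is the rigorous passage $h \downarrow 0$ in the Steklov averages, in particular arguing that the pointwise-in-$t$ evaluation $\int_{B_\rho \times \{t\}} ([u]_h - k)_+^2 \zeta^p\,dx \to \int_{B_\rho \times \{t\}} (u-k)_+^2 \zeta^p\,dx$ for a.e.\ $t$ (which is where the $C^0(-T,T;L^2_{\loc}(\Om))$ assumption is crucial) and that this convergence is compatible with taking the supremum over $t$. Everything else is bookkeeping with Young's inequality.
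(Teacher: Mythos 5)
Your proposal is correct and is essentially the same argument the paper relies on: the paper does not prove this lemma itself but refers to the standard Steklov-average energy estimate in \cite[Proposition 3.1, Chapter II]{DiBenedetto}, which proceeds exactly as you do — testing with $([u]_h-k)_+\zeta^p$, integrating by parts in time (the boundary term at $s=-\tht$ vanishing since $\zeta$ vanishes on the parabolic boundary), letting $h\downarrow 0$ using the $C^0(-T,T;L^2_{\loc})$ regularity, and then invoking coercivity, the growth bound and Young's inequality to absorb the cross term. The only cosmetic discrepancy is that this derivation yields $\zeta^p$ rather than $\zeta^2$ in the $\sup_t$ term on the left (as in DiBenedetto's own statement), which for $p>2$ is the formally weaker version; this is immaterial for the paper, since in \cref{sec3} the estimate is only used with cutoffs that equal $1$ on the smaller cylinders over which the supremum is taken.
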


We now recall the following well known lemma concerning the geometric convergence of sequence of numbers (see \cite[Lemma 4.1 from Section I]{DiBenedetto} for the details): 
\begin{lemma}\label{geo_con}
Let $\{Y_n\}$, $n=0,1,2,\ldots$, be a sequence of positive number, satisfying the recursive inequalities 
\[ Y_{n+1} \leq C b^n Y_{n}^{1+\alpha}\]
where $C > 1$, $b>1$, and $\alpha > 0$ are given numbers. If 
\[ Y_0 \leq  C^{-\frac{1}{\alpha}}b^{-\frac{1}{\alpha^2}},\]
then $\{Y_n\}$ converges to zero as $n\to \infty$. 
\end{lemma}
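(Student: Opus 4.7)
The plan is to prove the stronger quantitative statement that $Y_n \leq Y_0\, q^n$ for all $n\ge 0$, where $q := b^{-1/\alpha} \in (0,1)$. Since $b>1$ gives $q<1$, this immediately forces $Y_n \to 0$ as $n \to \infty$, which is all that is being claimed. The argument is a straightforward induction on $n$. The base case $n=0$ is trivial. For the inductive step, assuming $Y_n \leq Y_0 q^n$, I plug this into the hypothesized recursion to obtain
\[
Y_{n+1} \;\leq\; C b^n Y_n^{1+\alpha} \;\leq\; C b^n Y_0^{1+\alpha} q^{n(1+\alpha)}.
\]
The exponent $q = b^{-1/\alpha}$ was chosen precisely so that $b^n q^{n\alpha} = 1$, which collapses the right-hand side to $C Y_0^{1+\alpha} q^n$. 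To close the induction at level $n+1$ I need $C Y_0^{1+\alpha} q^n \leq Y_0\, q^{n+1}$, i.e.\ $C Y_0^{\alpha} \leq q = b^{-1/\alpha}$, which rearranges exactly to $Y_0 \leq C^{-1/\alpha} b^{-1/\alpha^2}$ — the smallness hypothesis of the lemma.

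The only nontrivial point — and it is really a matter of bookkeeping rather than a genuine obstacle — is identifying the correct geometric ratio to carry through the induction. A naive ansatz $Y_n \le c^n$ fails because the recursion contains both the growing factor $b^n$ and the superlinear power $(1+\alpha)$, and these two effects must be balanced simultaneously. Reverse-engineering the condition $b^n q^{n\alpha} \equiv 1$ pins down $q = b^{-1/\alpha}$ uniquely, after which the threshold on $Y_0$ is exactly what is needed to absorb the remaining constant $C$. No further tools (compactness, monotonicity, etc.) are required; the entire argument is a one-line induction once the right geometric rate has been guessed.
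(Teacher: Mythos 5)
Your proof is correct, and it is essentially the standard argument for this classical fast-geometric-convergence lemma: the inductive ansatz $Y_n \le Y_0\, b^{-n/\alpha}$, with the smallness of $Y_0$ used exactly once to absorb the constant $C$ and close the induction. The paper does not reprove the lemma but cites DiBenedetto (Lemma~4.1 of Chapter~I), where the proof given is the same one you have reconstructed.
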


\section{Local iterative estimates}
\label{sec3}

Henceforth, we will fix $\sigma \in (0,1)$ and $\rho,\tht \in (0,\infty)$. For $i \in \NN$, we define
\begin{equation*}
\rho_i := \sigma \rho + \frac{(1-\sigma)\rho}{2^i} \txt{and} \tht_i := \sigma \tht + \frac{(1-\sigma)\tht}{2^i}.
\end{equation*}
Corresponding to these radii, we have the following nested sequence of cylinders
\begin{gather*}
Q_i:= Q_{\rho_i, \theta_i} \txt{with} Q_0= Q_{\rho,\tht} \txt{and} Q_{\infty}= Q_{\sigma \rho, \sigma \tht}.
\end{gather*}
We shall define the following radii:
\begin{gather*}
\rhot_i:= \frac{\rho_i + \rho_{i+1}}{2} \txt{and} \thtt_i:= \frac{\tht_i  + \tht_{i+1}}{2}.
\end{gather*}
It is then easy to see that the following holds:
\begin{equation}
\label{tq_def}
Q_{i+1} \subset \tQ_i :=Q_{\rhot_i,\thtt_i} \subset Q_i.
\end{equation}
Subordinate to the cylinders defined in \cref{tq_def}, we  consider the following sequence of cut-off functions $\{ \zeta_i\}$ for $i\in \NN$:
\begin{equation*}
\zeta_i \in C_c^{\infty} (Q_i) \txt{with} \zeta_i = \left\{
\begin{array}{l}
 1 \txt{on} \tQ_i, \\
 0 \txt{on} \pa_p Q_i.
 \end{array}\right.
 \end{equation*}
 Moreover, the cut-off functions $\{\zeta_i\}$ satisfies
 \begin{equation}\label{zeta_bound}
 |\nabla \zeta_{i}| \leq \frac{2^{i+1}}{(1-\sigma)\rho} \txt{and} | \partial_{t} \zeta_i| \leq \frac{2^{i+2}}{(1-\sigma)\tht}.
 \end{equation}

Let us make the following choice of exponents: let $q:= p \frac{N+2}{N}$ be the Sobolev exponent such that $V_0^{2,p} \subset L^q$ with $q>2$. Denote $\ve_0$ to be a positive constant such that
\begin{equation}\label{def_ve_0}
    q > p+ \ve_0 \txt{and} p+\ve_0 > 2.
\end{equation}
In particular, this would require $\max\{2-\ve_0,q-\ve_0\} < p < \infty$ to hold. In \cref{F:imp_bound} and \cref{S:imp_bound}, we shall make more precise choices of $\ve_0$ and the range of $p$ according to the hypothesis of \cref{main_thm1} and \cref{main_thm2}.


Let $k>0$ be a large constant to be eventually chosen and denote for $i = \NN$, 
\begin{gather}
k_i := k - \frac{k}{2^{i}} \txt{ which gives} k_0 = 0 \txt{and} k_{\infty} = k. \label{def_k_i}
\end{gather}
Let us now denote the superlevel sets of $u$ by
\begin{equation}
\label{2.9}
A_{i+1}:= \{ z \in Q_{i+1}: u(z) > k_{i+1}\}.
\end{equation}

We will need the following useful estimate (see \cite[Equation (7.2) of Section V]{DiBenedetto} for the details):
\begin{lemma}
Let $k_i$ be as in \cref{def_k_i}, then for any $s \geq 1$,  there holds
\begin{equation}
\label{2.10}
 \abs{A_{i+1}} \leq \frac{2^{s(i+1)}}{k^s}\iint_{Q_i} \ukm{k_i}^s \,dz,
\end{equation}
where we have used the notation $\ukm{k_i} := (u-k_i) \lsb{\chi}{\{u > k_i\}}$.
\end{lemma}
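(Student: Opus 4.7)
The plan is to use a Chebyshev-type (Markov) inequality, exploiting the gap between the consecutive truncation levels $k_i$ and $k_{i+1}$.

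First I would compute the difference between consecutive levels directly from \cref{def_k_i}:
\[
k_{i+1} - k_i = \lbr k - \frac{k}{2^{i+1}} \rbr - \lbr k - \frac{k}{2^{i}} \rbr = \frac{k}{2^{i+1}}.
\]
Next, by the definition of the superlevel set in \cref{2.9}, any $z \in A_{i+1}$ satisfies $u(z) > k_{i+1}$, which combined with the above gap gives the pointwise bound
\[
\ukm{k_i}(z) = u(z) - k_i > k_{i+1} - k_i = \frac{k}{2^{i+1}} \txt{for all} z \in A_{i+1}.
\]

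Then I would raise to the power $s \geq 1$ and integrate over $A_{i+1}$. Using the inclusion $A_{i+1} \subset Q_{i+1} \subset Q_i$ (which follows from $Q_{i+1} \subset Q_i$), the nonnegativity of $\ukm{k_i}^s$, and the pointwise bound just derived:
\[
\iint_{Q_i} \ukm{k_i}^s \,dz \geq \iint_{A_{i+1}} \ukm{k_i}^s \,dz \geq \lbr \frac{k}{2^{i+1}} \rbr^s |A_{i+1}|.
\]
Rearranging yields \cref{2.10} directly.

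There is no real obstacle here: the statement is a one-line consequence of Chebyshev's inequality tailored to the geometrically increasing sequence $\{k_i\}$. The only care needed is to keep track of the nested inclusions $A_{i+1} \subset Q_{i+1} \subset Q_i$ and to use the sharp gap $k/2^{i+1}$ rather than a coarser estimate, since the factor $2^{s(i+1)}$ in the conclusion must match exactly.
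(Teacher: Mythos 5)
Your proof is correct and is essentially identical to the paper's: the paper also restricts the integral $\iint_{Q_i} \ukm{k_i}^s\,dz$ to the set $\{u>k_{i+1}\}$, uses the pointwise bound $\ukm{k_i} > k_{i+1}-k_i = k/2^{i+1}$ there, and concludes via the inclusion of $A_{i+1}$. No differences worth noting.
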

\begin{proof}
We have the following sequence of estimates
\begin{equation*}
\iint_{Q_i} \ukm{k_i}^s \,dz  \geq  \iint_{Q_i} \ukm{k_i}^s \,\lsb{\chi}{\{u > k_{i+1}\}} \,dz \geq (k_{i+1} - k_i)^s |A_{i+1}| = \frac{k^s}{2^{s(i+1)}}|A_{i+1}|.
\end{equation*}
\end{proof}

Let us apply \cref{energy_est} over cylinders from \cref{tq_def} with $k_i$ defined as in \cref{def_k_i} and estimate each of the terms appearing on the right hand side of \cref{energy_est} as follows:
\begin{description}[leftmargin=*]
\item[First term:] We have the following sequence of estimates:
\begin{equation}\label{2.11}\hspace*{-0.8cm}
\begin{array}{rcl}
\iint_{Q_i} \ukm{k_{i+1}}^p \,dz & \overset{\redlabel{2.11.a}{a}}{\leq} & \lbr \iint_{Q_i} \ukm{k_{i+1}}^{p+ \ve_0} \,dz \rbr^{\frac{p}{p+\ve_0}} \lbr \iint_{Q_i} \lsb{\chi}{\{u>k_{i+1}\}}\,dz \rbr^{\frac{\ve_0}{p+\ve_0}}\\
& \overset{\redlabel{2.11.b}{b}}{=} & \lbr \iint_{Q_i} \ukm{k_{i+1}}^{p+ \ve_0} \,dz \rbr^{\frac{p}{p+\ve_0}} \abs{A_{i+1}}^{\frac{\ve_0}{p+\ve_0}}\\
& \overset{\cref{2.10}}{\leq} & \lbr \iint_{Q_i} \ukm{k_{i+1}}^{p+ \ve_0} \,dz \rbr^{\frac{p}{p+\ve_0}} \lbr \frac{2^{(p+\ve_0)(i+1)}}{k^{p+\ve_0}} \iint_{Q_i} \ukm{k_{i+1}}^{p+ \ve_0} \,dz \rbr^{\frac{\ve_0}{p+\ve_0}}\\
& = & \frac{2^{\ve_0(i+1)}}{k^{\ve_0}} \iint_{Q_i} \ukm{k_{i}}^{p+ \ve_0} \,dz.
\end{array}
\end{equation}
To obtain \redref{2.11.a}{a}, we made us of H\"older's inequality and to obtain \redref{2.11.b}{b}, we used the definition of $A_{i+1}$ from \cref{2.9}.

\item[Second term:] Similarly, we estimate the second term as
\begin{equation}
\label{2.12}\hspace*{-0.8cm}
\begin{array}{rcl}
\iint_{Q_i} \ukm{k_{i+1}}^2 \,dz & \overset{\redlabel{2.12.a}{a}}{\leq} & \lbr \iint_{Q_i} \ukm{k_{i+1}}^{p+ \ve_0} \,dz \rbr^{\frac{2}{p+\ve_0}} \lbr \iint_{Q_i} \lsb{\chi}{\{u>k_{i+1}\}}\ dz \rbr^{\frac{p+\ve_0-2}{p+\ve_0}}\\
& \overset{\redlabel{2.12.b}{b}}{=}& \lbr \iint_{Q_i} \ukm{k_{i+1}}^{p+ \ve_0} \,dz \rbr^{\frac{2}{p+\ve_0}} \abs{A_{i+1}}^{\frac{p+\ve_0-2}{p+\ve_0}}\\
& \overset{\cref{2.10}}{\leq} & \lbr \iint_{Q_i} \ukm{k_{i+1}}^{2+ \ve_0} \,dz \rbr^{\frac{2}{p+\ve_0}} \lbr \frac{2^{(p+\ve_0)(i+1)}}{k^{p+\ve_0}} \iint_{Q_i} \ukm{k_{i+1}}^{p+ \ve_0} \,dz \rbr^{\frac{p+\ve_0-2}{p+\ve_0}}\\
& = & \frac{2^{(p+\ve_0-2)(i+1)}}{k^{p+\ve_0-2}} \iint_{Q_i} \ukm{k_{i}}^{p+ \ve_0} \,dz.
\end{array}
\end{equation}
To obtain \redref{2.12.a}{a}, we note that $p+\ve_0>2$ due to \cref{def_ve_0}  which enables us to apply H\"older's inequality and to obtain \redref{2.12.b}{b}, we used the definition of $A_{i+1}$ from \cref{2.9}.
\end{description}

Substituting \cref{2.11} and \cref{2.12} into the estimate from \cref{energy_est} and removing derivatives of the cut-off function using \cref{zeta_bound}, we get
\begin{equation}
\label{2.14}
\begin{array}{ll}
 & \hspace*{-2cm}\sup_{-\tilde{\tht}_i<t<\tht_i}  \int_{{B}_{\tilde{\rho}_i}} (u-k_{i+1})_+^2  \,dx  +   \iint_{\tilde{Q}_{i}} |\nabla (u-k_{i+1})_+|^p  \,dz \\
 &\qquad \qquad \leq  C  \lbr \frac{2^{(i+2)p}}{(1-\sigma)^p \rho^p} \frac{2^{\ve_0 (i+1)}}{k^{\ve_0}} + \frac{2^{i+2}}{(1-\sigma)\tht}\frac{2^{(p+\ve_0-2)(i+1)}}{k^{p+\ve_0-2}} \rbr \iint_{Q_i} \ukm{k_{i}}^{p+ \ve_0} \,dz \\
 &\qquad \qquad \leq C \frac{2^{(i+2)(p+\ve_0)}}{(1-\sigma)^p }  \lbr   \frac{1}{\rho^pk^{\ve_0}} + \frac{1}{\tht k^{p+\ve_0-2}} \rbr \iint_{Q_i} \ukm{k_{i}}^{p+ \ve_0} \,dz.
\end{array}
\end{equation}

Let us now define
\begin{equation}\label{Y_i}
Y_i:= \fiint_{Q_i} \ukm{k_i}^{p+\ve_0} \,dz,
\end{equation}
then we have the following sequence of estimates
\begin{equation}
\label{2.15}
\begin{array}{rcl}
Y_{i+1} & \overset{\redlabel{2.15.a}{a}}{\leq} & \fiint_{\tilde{Q}_{i}} \ukm{k_{i+1}}^{p+\ve_0} \tilde{\zeta}_i^{p+\ve_0} \,dz \\
& \overset{\redlabel{2.15.b}{b}}{\leq} & C \lbr \fiint_{\tilde{Q}_{i}} \ukm{k_{i+1}}^{q} \tilde{\zeta}_i^{q} \,dz\rbr^{\frac{p+\ve_0}{q}} \lbr[[] \frac{\abs{A_{i+1}}}{|Q_i|}\rbr[]]^{\frac{q-(p+\ve_0)}{q}} \\
& \overset{\redlabel{2.15.c}{c}}{\leq}& C\lbr \fiint_{\tilde{Q}_{i}} \ukm{k_{i+1}}^{q} \tilde{\zeta}_i^{q} \,dz\rbr^{\frac{p+\ve_0}{q}} \lbr[[] \frac{2^{(i+1)(p+\ve_0)}}{k^{p+\ve_0}} \,Y_i\rbr[]]^{\frac{q-(p+\ve_0)}{q}}.
\end{array}
\end{equation}
To obtain \redref{2.15.a}{a}, we used the definition from \cref{Y_i} along with the cut-off function $\tilde{\zeta}_i \in C_c^{\infty}$ such that 
\begin{equation*}
\tilde{\zeta}_i = \left\{
\begin{array}{l}
 1 \txt{on} Q_{i+1}, \\
 0 \txt{on the lateral boundary of} \tQ_i,
 \end{array}\right.
 \end{equation*}
 satisfying $|\nabla \tilde{\zeta}_i| \leq \frac{2^{i+2} }{(1-\sigma)\rho}$. To obtain \redref{2.15.b}{b}, we apply H\"older's inequality noting \cref{def_ve_0} along with making use of \cref{2.9} and finally to obtain \redref{2.15.c}{c}, we make use of \cref{2.10}.

From Sobolev embedding  given in \cref{par_sob_emb} and properties of $\tilde{\zeta}_{i}$, we recall the estimate
\begin{equation}
\label{sob_dib}
\begin{array}{rcl}
\iint_{\tilde{Q}_{i}} \ukm{k_{i+1}}^{q} \tilde{\zeta}_i^{q} \,dz & \leq & \lbr \sup_{-\tilde{\tht}_i<t<\tilde{\tht}_i} \int_{{B}_{\tilde{\rho}_i}} \ukm{k_{i+1}}^2  \,dx \rbr^{\frac{p}{N}} \\
&& \times \lbr  \iint_{\tilde{Q}_i} |\nabla \ukm{k_{i+1}}|^p \,dz + \iint_{\tilde{Q}_i}  \ukm{k_{i+1}}^p |\nabla \tilde{\zeta}_i|^p \,dz\rbr.
\end{array}
\end{equation}

Our goal is to estimate each of the terms on the right hand side of \cref{sob_dib} using \cref{2.14} which we do as follows:
\begin{description}
\item[$\bullet$] Estimate for $\sup_{-\tilde{\tht}_i<t<\tilde{\tht}_i} \int_{{B}_{\tilde{\rho}_i}} \ukm{k_{i+1}}^2  \,dx$:  we make use of \cref{2.14} to get
\begin{equation}
\label{2.17}
\begin{array}{rcl}
\sup_{-\tilde{\tht}_i<t<\tilde{\tht}_i} \int_{{B}_{\tilde{\rho}_i}} \ukm{k_{i+1}}^2 \, dx & \leq & C \frac{2^{(i+2)(p+\ve_0)}}{(1-\sigma)^p }  \lbr   \frac{1}{\rho^pk^{\ve_0}} + \frac{1}{\tht k^{p+\ve_0-2}} \rbr \iint_{Q_i} \ukm{k_{i}}^{p+ \ve_0} \,dz.
\end{array}
\end{equation}

\item[$\bullet$] Estimate for $\iint_{\tilde{Q}_i} |\nabla \ukm{k_{i+1}}|^p \, dz$:  this term is also estimated from \cref{2.14} to get
\begin{equation}\label{2.18}
\begin{array}{rcl}
\iint_{\tilde{Q}_i} |\nabla \ukm{k_{i+1}}|^p \, dz & \leq & C \frac{2^{(i+2)(p+\ve_0)}}{(1-\sigma)^p }  \lbr   \frac{1}{\rho^pk^{\ve_0}} + \frac{1}{\tht k^{p+\ve_0-2}} \rbr \iint_{Q_i} \ukm{k_{i}}^{p+ \ve_0} \, dz.
\end{array}
\end{equation}
\item[$\bullet$] Estimate for $\iint_{\tilde{Q}_i}  \ukm{k_{i+1}}^p |\nabla \tilde{\zeta}_i|^p \ dz$: we estimate this term as follows:
\begin{equation}\label{2.19}
\begin{array}{rcl}
\iint_{\tilde{Q}_i}  \ukm{k_{i+1}}^p |\nabla \tilde{\zeta}_i|^p \,dz & \overset{\redlabel{2.19.a}{a}}{\leq} & \frac{2^{p(i+2)}}{(1-\sigma)^p \rho^p}\iint_{\tilde{Q}_i}  \ukm{k_{i+1}}^p \,dz\\
& \overset{\redlabel{2.19.b}{b}}{\leq} &\frac{2^{p(i+2)}}{(1-\sigma)^p \rho^p} \frac{2^{\ve_0(i+2)}}{k^{\ve_0}} \iint_{Q_i} \ukm{k_{i}}^{p+ \ve_0} \,dz.
\end{array}
\end{equation}
To obtain \redref{2.19.a}{a}, we made use of the bound $|\nabla \tilde{\zeta}_i| \leq \frac{2^{i+2} }{(1-\sigma)\rho}$ and to obtain \redref{2.19.b}{b}, we made use of \cref{2.11}.
\end{description}

Combining \cref{2.17}, \cref{2.18} and \cref{2.19} into \cref{sob_dib}, we get
\begin{equation}\label{2.20}
\begin{array}{rcl}
\iint_{\tilde{Q}_{i}} \ukm{k_{i+1}}^{q} \tilde{\zeta}_i^{q} \,dz & \leq &C \lbr[[] \frac{2^{(i+2)(p+\ve_0)}}{(1-\sigma)^p }  \lbr   \frac{1}{\rho^pk^{\ve_0}} + \frac{1}{\tht k^{p+\ve_0-2}} \rbr \iint_{Q_i} \ukm{k_{i}}^{p+ \ve_0} \,dz\rbr[]]^{\frac{p}{N}} \\
&& \times \frac{2^{(i+2)(p+\ve_0)}}{(1-\sigma)^p }  \lbr   \frac{1}{\rho^pk^{\ve_0}} + \frac{1}{\tht k^{p+\ve_0-2}} \rbr \iint_{Q_i} \ukm{k_{i}}^{p+ \ve_0} \,dz\\
& = & C \lbr[[] \frac{2^{(i+2)(p+\ve_0)}}{(1-\sigma)^p }  \lbr   \frac{1}{\rho^pk^{\ve_0}} + \frac{1}{\tht k^{p+\ve_0-2}} \rbr |Q_i|\, Y_i\rbr[]]^{1+\frac{p}{N}}.
\end{array}
\end{equation}
Note that $|Q_{i}| \approx |\tQ_i| \approx \rho^N \tht$. After dividing \cref{2.20} throughout by $|\tQ_i|$, we get
\begin{equation}
\label{2.21}
\begin{array}{rcl}
\fiint_{\tilde{Q}_{i}} \ukm{k_{i+1}}^{q} \tilde{\zeta}_i^{q} \,dz
& \leq & C \lbr[[] \frac{2^{(i+2)(p+\ve_0)}}{(1-\sigma)^p }  \lbr   \frac{1}{\rho^pk^{\ve_0}} + \frac{1}{\tht k^{p+\ve_0-2}} \rbr |Q_i|^{\frac{p}{N+p}}\, Y_i\rbr[]]^{1+\frac{p}{N}}\\
& \leq  & C \lbr[[] \frac{2^{(i+2)(p+\ve_0)}}{(1-\sigma)^p }  \lbr   \lbr \frac{\tht}{\rho^p}\rbr^{\frac{p}{N+p}}\frac{1}{k^{\ve_0}} + \lbr \frac{\rho^p}{\tht}\rbr^{\frac{N}{N+p}}\frac{1}{ k^{p+\ve_0-2}} \rbr Y_i\rbr[]]^{1+\frac{p}{N}}.
\end{array}
\end{equation}
Now substituting \cref{2.21} into \cref{2.15}, we get the following
\begin{equation}
\label{est_Y_i}
\begin{array}{rcl}
Y_{i+1} & \leq &C \lbr[[] \frac{2^{(i+2)(p+\ve_0)}}{(1-\sigma)^p }  \lbr   \lbr \frac{\tht}{\rho^p}\rbr^{\frac{p}{N+p}}\frac{1}{k^{\ve_0}} + \lbr \frac{\rho^p}{\tht}\rbr^{\frac{N}{N+p}}\frac{1}{ k^{p+\ve_0-2}} \rbr Y_i\rbr[]]^{\lbr 1+\frac{p}{N}\rbr \frac{p+\ve_0}{q}} \lbr[[] \frac{2^{(i+2)(p+\ve_0)}}{k^{p+\ve_0}}\, Y_i\rbr[]]^{\frac{q-(p+\ve_0)}{q}}\\
& = &C \frac{\lbr[[] {2^{(i+2)(p+\ve_0)}} \rbr[]]^{ 1+\frac{p}{N} \frac{p+\ve_0}{q}}}{(1-\sigma)^{p\lbr 1+\frac{p}{N}\rbr \frac{p+\ve_0}{q}}}\lbr   \lbr \frac{\tht}{\rho^p}\rbr^{\frac{p}{N+p}}\frac{1}{k^{\ve_0}} + \lbr \frac{\rho^p}{\tht}\rbr^{\frac{N}{N+p}}\frac{1}{ k^{p+\ve_0-2}} \rbr^{\lbr 1+\frac{p}{N}\rbr \frac{p+\ve_0}{q}} 
k^{(p+\ve_0)\frac{(p+\ve_0)-q}{q}}
Y_i^{ 1+\frac{p}{N} \frac{p+\ve_0}{q}}\\
& \overset{\redlabel{2.22.a}{a}}{\leq} &C \frac{\lbr[[] {2^{(i+2)(p+\ve_0)}} \rbr[]]^{ 1+\frac{p}{N} \frac{p+\ve_0}{q}}}{(1-\sigma)^{p\lbr 1+\frac{p}{N}\rbr \frac{p+\ve_0}{q}}}\lbr   \lbr \frac{\tht}{\rho^p}\rbr\frac{1}{k^{\ve_0\frac{N+p}{p}}} + \lbr \frac{\rho^p}{\tht}\rbr^{\frac{N}{p}}\frac{1}{ k^{(p+\ve_0-2)\frac{N+p}{p}}} \rbr^{ \frac{p}{N} \frac{p+\ve_0}{q}} 
k^{(p+\ve_0)\frac{(p+\ve_0)-q}{q}}Y_i^{ 1+\frac{p}{N} \frac{p+\ve_0}{q}}\\
& = & \mathbf{C}_0\frac{b^i }{\Upsilon(\sigma)}\aa_k^{ \frac{p}{N} \frac{p+\ve_0}{q}} k^{(p+\ve_0)\frac{(p+\ve_0)-q}{q}} Y_i^{ 1+\frac{p}{N} \frac{p+\ve_0}{q}} .
\end{array}
\end{equation}
To obtain \redref{2.22.a}{a}, we applied Jensen's inequality with exponent $\frac{N+p}{p}$. In the above estimate, 
$\mathbf{C}_0 = \mathbf{C}_0(N,p,\lamot)$ denotes a universal constant and we have set
\begin{equation}\label{a_k}
\begin{array}{rcl}
\aa_k &:= &  \lbr   \lbr \frac{\tht}{\rho^p}\rbr\frac{1}{k^{\ve_0\frac{N+p}{p}}} + \lbr \frac{\rho^p}{\tht}\rbr^{\frac{N}{p}}\frac{1}{ k^{(p+\ve_0-2)\frac{N+p}{p}}}\rbr,\\
b &:= & 2^{(p+\ve_0) \lbr 1 +  \frac{p}{N} \frac{p+\ve_0}{q}\rbr}, \\
\Upsilon(\sigma) & := & (1-\sigma)^{p\lbr 1+\frac{p}{N}\rbr \frac{p+\ve_0}{q}}.
\end{array}
\end{equation}

\section{Proof of \texorpdfstring{\cref{main_thm1}}.}
\label{F:imp_bound}
Let us make the choice 
\[
    \ve_0 = \frac{4}{N+2} \txt{and} \frac{2N}{N+2} < p < \infty,
\]
noting that for $q:= p\frac{N+2}{N}$, both the conditions in \cref{def_ve_0} are satisfied and hence all the estimates from \cref{sec3} are applicable.
Let us now set
\begin{equation}\label{def_al}
\al:= \frac{p}{N} \lbr\frac{p+\ve_0}{q}\rbr.
\end{equation}
If we now choose $k$ large enough such that $k \geq 1$, then $\aa_k$ as obtained in \cref{a_k} would be independent of $k$ since $\ve_0 \frac{N+p}{p}>0$ and $p+\ve_0 -2 >0$. In particular, we will have
\begin{equation}\label{def_aa_bnd}
\aa_k \leq \aa := \lbr \frac{\tht}{\rho^p} \rbr + \lbr \frac{\rho^p}{\tht} \rbr^{\frac{N}{p}}.
\end{equation}

\begin{remark}
    Indeed, if we wish to balance two terms on $\aa_k$ such that
\[
\lbr \frac{\tht}{\rho^p}\rbr\frac{1}{k^{\ve_0\frac{N+p}{p}}} \approx \lbr \frac{\rho^p}{\tht}\rbr^{\frac{N}{p}}\frac{1}{ k^{(p+\ve_0-2)\frac{N+p}{p}}}.
\]
This gives 
\[
 \rho^{p} k^{\ve_0} \approx \theta k^{p+\ve_0 -2},
\]
which is exactly what is obtained in \cite[Equation (12.2) of Section V]{DiBenedetto}. In particular,  they first determine $k$ to depend on $\rho$, $\theta$, and $p-2$ and later make $k$ large depending on other data which forces $p-2>0$. On the other hand, our approach removes this difficulty as long as $k \geq 1$.
\end{remark}

Using \cref{def_aa_bnd} into \cref{est_Y_i}, in order to make use of \cref{geo_con}, we see that  $Y_i \rightarrow 0$ as $i\rightarrow \infty$ provided
\begin{equation}\label{2.25}
Y_0 := \fiint_{Q_{\rho,\tht}}|u|^{p+\ve_0} \,dz \leq  \lbr[[] \frac{\aa^{ \frac{p}{N} \frac{p+\ve_0}{q}}}{(1-\sigma)^{p\lbr 1+\frac{p}{N}\rbr \frac{p+\ve_0}{q}}}\frac{\mathbf{C}_0}{k^{(p+\ve_0)\frac{q-(p+\ve_0)}{q}}}\rbr[]]^{-\frac{1}{\al}} b^{-\frac{1}{\al^2}},
\end{equation}
where $\al$ is from \cref{def_al}.  In particular, we can choose $k$ large enough such that equality holds in \cref{2.25}, i.e., the following equality holds:
\begin{equation}\label{2.26}
\begin{array}{rcl}
Y_0 := \fiint_{Q_{\rho,\tht}}|u|^{p+\ve_0} \,dz & =&  \lbr[[] \frac{\aa^{ \frac{p}{N} \frac{p+\ve_0}{q}}}{(1-\sigma)^{p\lbr 1+\frac{p}{N}\rbr \frac{p+\ve_0}{q}}}\frac{\mathbf{C}_0}{k^{(p+\ve_0)\frac{q-(p+\ve_0)}{q}}}\rbr[]]^{-\frac{1}{\al}} b^{-\frac{1}{\al^2}} \\
& = & \lbr[[] \frac{(1-\sigma)^{p\lbr 1+\frac{p}{N}\rbr \frac{p+\ve_0}{q}}}{\mathbf{C}_0\,\aa^{ \frac{p}{N} \frac{p+\ve_0}{q}}} \rbr[]]^{\frac{Nq}{p(p+\ve_0)}} k^{\frac{N(q-(p+\ve_0))}{p}} b^{-\lbr\frac{Nq}{p(p+\ve_0)}\rbr^2}.
\end{array}
\end{equation}
Henceforth, we shall fix the constant $k$ large enough such that \cref{2.26} holds which is possible since $p+\ve_0 < q$. Moreover, this also implies  $Y_0 < \infty$.

From the choice of $k$, we apply \cref{geo_con} to conclude
\[Y_{\infty} = \fiint_{Q_{\sigma \rho,\sigma \tht}} \ukm{k}^{p+\ve_0}\,dz = 0\] 
which is the same as 
\begin{equation*}
\sup_{Q_{\sigma \rho,\sigma \tht}} u \leq k \bigwedge 1.
\end{equation*} 
In particular, using \cref{2.26}, we have the following quantitative estimate
\begin{equation}\label{2.27}
\begin{array}{rcl}
\sup_{Q_{\sigma \rho,\sigma \tht}} u & \leq  & \mathbf{C} (N,p,\lamot) \frac{\aa^{\frac{p(N+2)}{2(p(N+2)-2N)}}}{(1-\sigma)^{\frac{p(N+p)(N+2)}{2\left(p(N+2)-2N\right)}}}  \lbr \fiint_{Q_{\rho,\tht}}|u|^{p+\ve_0} \,dz\rbr^{\frac{p(N+2)}{2(p(N+2)-2N)}} \bigwedge 1.
\end{array}
\end{equation}
Because $q > p+\ve_0 $, we can apply the local Sobolev embedding from \cref{par_sob_emb} to control the last term of \cref{2.27}. It is important to note that  the constant $\mathbf{C}$ in \cref{2.27} is stable in the range $p \in \lbr\frac{2N}{N+2}, \infty\rbr$.

\section{Proof of \texorpdfstring{\cref{main_thm2}}.}
\label{S:imp_bound}
In this section, we restrict our interest to the following region:
\begin{equation}
\label{3.1}
\ve_0 = \frac{2}{N+1} \txt{and} \frac{2N}{N+1} < p < \infty.
\end{equation}
With these choices, we see that all the results of \cref{sec3} and \cref{F:imp_bound} are applicable. In particular, we have
\begin{itemize}
    \item The bound $p+\ve_0 -2 > 0$ holds since
    \[p+ \ve_0 - 2 = p + \frac{2}{N+1} - 2 > \frac{2N}{N+1} + \frac{2}{N+1} - 2  = \frac{2N + 2}{N+1} - 2=0 .\]
    \item The bound $q > p+\ve_0$ holds since 
    \[2p- N\ve_0 =2p - \frac{2N}{N+1} > \frac{4N}{N+1} - \frac{2N}{N+1} =\frac{2N}{N+1} >0.\]
\end{itemize}

The proof of \cref{main_thm2} follows by iterating \cref{main_thm1} which we do as follows.

Let $\sigma \in (0,1)$ be given and fix the following cylinders:
\begin{equation}\label{3.2}
\rho_n:= \sigma \rho + (1-\sigma)\rho \sum_{i=1}^n \frac{1}{2^i} \txt{and}\tht_n:= \sigma \tht + (1-\sigma)\tht \sum_{i=1}^n \frac{1}{2^i},
\end{equation}
and the corresponding cylinders
\begin{equation*}
Q_n:= Q_{\sigma_n,\rho_n}, \txt{that gives} Q_0= Q_{\sigma \rho, \sigma \tht} \txt{and} Q_{\infty} = Q_{\rho,\tht}.
\end{equation*}
Let us set
\begin{equation*}
M_n:= \sup_{Q_n} u,
\end{equation*}


Let us now apply \cref{2.27} over the cylinders $Q_n$ and $Q_{n+1}$ to get
\begin{equation}\label{3.5}
\begin{array}{rcl}
M_n & \leq &  \mathbf{C}\, \aa^{\frac{p}{N (q-(p+\ve_0))}} \frac{2^{\frac{(n+1) p}{q-(p+\ve_0)}\lbr 1+\frac{p}{N} \rbr }}{(1-\sigma)^{\frac{p}{q-(p+\ve_0)}\lbr 1+\frac{p}{N} \rbr }}\lbr \fiint_{Q_{n+1}}|u|^{p+\ve_0} \,dz\rbr^{\frac{p}{N(q-(p+\ve_0))}} \bigwedge 1 \\
& \leq & \mathbf{C}\, \aa^{\frac{p}{N (q-(p+\ve_0))}} \frac{2^{\frac{(n+1) p}{q-(p+\ve_0)}\lbr 1+\frac{p}{N} \rbr }}{(1-\sigma)^{\frac{p}{q-(p+\ve_0)}\lbr 1+\frac{p}{N} \rbr }}\lbr\frac{|Q_{\infty}|}{|Q_{n+1}|} \fiint_{Q_{\infty}}|u|^{p} \,dz\rbr^{\frac{p}{N(q-(p+\ve_0))}} M_{n+1}^{\frac{\ve_0 p}{N(q-(p+\ve_0))}} \bigwedge 1.
\end{array}
\end{equation}
From \cref{3.2}, we see that
\begin{equation}\label{3.6}
\frac{|Q_{\infty}|}{|Q_{n+1}|} \leq \frac{|Q_{\infty}|}{|Q_{0}|} = C(N)\frac{\rho^N \tht}{\sigma \tht (\sigma \rho)^N}\leq C(N) \frac{1}{\sigma^{N+1}}.
\end{equation}
Combining \cref{3.5} and \cref{3.6}, we get
\begin{equation}\label{3.7}
M_n \leq \mathbf{C}\, \frac{\aa^{\frac{p}{N (q-(p+\ve_0))}}}{\sigma^{\frac{p(N+1)}{N(q-(p+\ve_0))}}} \frac{2^{\frac{(n+1) p}{q-(p+\ve_0)}\lbr 1+\frac{p}{N} \rbr }}{(1-\sigma)^{\frac{p}{q-(p+\ve_0)}\lbr 1+\frac{p}{N} \rbr }}\lbr \fiint_{Q_{\rho,\theta}}|u|^{p} \,dz\rbr^{\frac{p}{N(q-(p+\ve_0))}} M_{n+1}^{\frac{\ve_0 p}{N(q-(p+\ve_0))}} \bigwedge 1.
\end{equation}

Note that from the choice of $\ve_0 = \frac{2}{N+1}$ from \cref{3.1} and $q = p\frac{N+2}{N}$, we see that
\begin{equation*}
\frac{\ve_0 p}{N(q-(p+\ve_0))} = \frac{p}{p(N+1)-N} < 1 \txt{since} p > \frac{2N}{N+1} > 1.
\end{equation*}
Let us define the following terms:
\begin{gather*}
\mathbf{d}:=2^{{\frac{p(N+p)}{N(q-(p+\ve_0)) -\ve_0 p}} }, \\
\mathbb{B} := \frac{1}{\eta^{\frac{\ep_0 p}{N(q-(p+\ve_0))-\ve_0p}}}  \lbr \frac{\mathbf{C}  \,\aa^{\frac{p}{N (q-(p+\ve_0))}}}{\sigma^{\frac{p(N+1)}{N(q-(p+\ve_0))}}(1-\sigma)^{\frac{p}{q-(p+\ve_0)} \lbr 1+\frac{p}{N} \rbr }}\lbr \fiint_{Q_{\rho,\theta}}|u|^{p} \,dz\rbr^{\frac{p}{N(q-(p+\ve_0))}}  \rbr^{{\frac{N(q-(p+\ve_0))}{N(q-(p+\ve_0)) -\ve_0 p}}}.
\end{gather*}

Let us now fix an $\eta \in (0,1)$ such that 
\begin{equation}\label{3.11}
\eta \, \mathbf{d} = \frac{1}{2},
\end{equation} 
and apply  Young's inequality to \cref{3.7} with exponents 
\[ {\frac{N(q-(p+\ve_0))}{\ve_0 p}}  \txt{and}  {\frac{N(q-(p+\ve_0))}{N(q-(p+\ve_0)) -\ve_0 p}},\] 
 to obtain the following estimate
\begin{equation}\label{3.12}
M_n \leq \eta M_{n+1} + \mathbb{B} d^{n+1} \txt{for} n=0, 1,2,\ldots
\end{equation}
Iterating the estimate \cref{3.12} and noting the choice of $\eta$ in \cref{3.11}, we get
\begin{equation}\label{3.13}
M_0 \leq \eta^n M_{n+1} + \mathbb{B} \mathbf{d} \sum_{j=0}^{n} (\eta \mathbf{d})^j \xrightarrow{n \nearrow \infty} \mathbb{C}(n,p) \mathbb{B}.
\end{equation}
In particular, \cref{3.13} gives the following  quantitative bound
\begin{equation*}
\sup_{Q_{\sigma \rho, \sigma \tht}} u \leq  \mathbf{C} (N,p,\lamot)  \frac{ \aa^{\frac{p(N+1)}{2N(p-1)}}}{\sigma^{\frac{p(N+1)^2}{2N(p-1)}}(1-\sigma)^{\frac{p(N+p)(N+1)}{2N(p-1)} }}\lbr \fiint_{Q_{\rho,\tht}}|u|^{p} \,dz\rbr^{\frac{p(N+1)}{2N(p-1)}}   \bigwedge 1.
\end{equation*}

\begin{remark}
From the proof of \cref{main_thm2}, we see that  $\ve_0 = \frac{2}{N+1}$  is not the best choice of the exponent.  To ensure the above calculations work, we need to ensure the following three conditions are satisfied:
\begin{enumerate}[(i)]
\item\label{cond1} $p+\ve_0 -2 >0$
\item\label{cond2} $\frac{p(N+2)}{N} > p+ \ve_0 \Longleftrightarrow 2p > N \ve_0 \Longleftrightarrow 2p - N\ve_0 >0$.
\item\label{cond3} $\frac{\ve_0p}{N(q-(p+\ve_0))} < 1 \Longleftrightarrow \ve_0p < Nq - NP - N\ve_0$.
\end{enumerate}
All three conditions provides lower and upper bounds of $\ve_0$ such that:
\[
2-p < \ve_0 < \frac{2p}{N+p}
\]
and the lower bound matters only when $p<2$.  Thus for all the estimates from \cref{S:imp_bound} to hold, we would require the following bound to be satisfied by $\ve_0$ and $p$:
\[ g(\ve_0) = (2- \ve_0)^2 - N \ve_0 > 0 . \]
Then we observe that 
\[
g\left(\frac{2}{N+1}\right) = \frac{2N(N-1)}{(N+1)^2} \geq 0 \ \text{ and } \
g\left(\frac{4}{N+2}\right) = \frac{-8N}{(N+2)^2} < 0,
\]
which implies there exists a root $\de_0$ with $\frac{2N}{N+2} < 2-\delta_0 < \frac{2N}{N+1}$. Thus with $\ve_0 = \de_0$ and $2-\de_0 < p < \infty$, then all the calculations of \cref{S:imp_bound} carries over and analogous estimates can be recovered.

Since the explicit expression of $\de_0$ is not obtainable in a clean way, we made the choices of $\ve_0 = \frac{2}{N+1}$ and $\frac{2N}{N+1}<p<\infty$ for clarity of exposition.

\end{remark}

\section*{References}
%
%


\end{document}